 \newtheorem{thm}{Theorem}
\begin{document}

\title{Further Results on the Classification of MDS Codes}
\author{%
Janne I. Kokkala\thanks{Supported by Aalto ELEC Doctoral School and Nokia Foundation}\; and Patric R. J. \"Osterg\aa rd\\
\hspace*{5mm}\\
Department of Communications and Networking\\
Aalto University School of Electrical Engineering\\
P.O.\ Box 13000, 00076 Aalto, Finland
}
\date{}

\maketitle

\begin{abstract}
A $q$-ary maximum distance separable (MDS) code $C$ with length $n$, dimension $k$ over an alphabet $\mathcal{A}$ of size $q$ is a set of $q^k$ codewords that are elements of $\mathcal{A}^n$, such that the Hamming distance between two distinct codewords in $C$ is at least $n-k+1$. Sets of mutually orthogonal Latin squares of orders $q\leq 9$, corresponding to two-dimensional \mbox{$q$-}ary MDS codes, and $q$-ary one-error-correcting MDS codes for $q\leq 8$ have been classified in earlier studies. These results are used here to complete the classification of all $7$-ary and $8$-ary MDS codes with $d\geq 3$ using a computer search.
\end{abstract}

\section{Introduction}

A \emph{$q$-ary} \emph{code} $C$ of \emph{length} $n$, and \emph{size} $M$ is a set of $M$ elements, called \emph{codewords}, of $\mathcal{A}^n$, where $\mathcal{A}$ is an alphabet of size $q$. The \emph{minimum distance} $d$ of a code is the smallest Hamming distance between any two distinct codewords. A code with these parameters is called an $(n,M,d)_q$ code. If $\mathcal{A}$ is a finite field and $C$ is a vector subspace, then $C$ is called \emph{linear}. A code that is not linear is called \emph{nonlinear}. Codes that can be either linear or nonlinear are called \emph{unrestricted}.

In the unrestricted case, two codes are called \emph{equivalent} if one can be obtained from the other by a permutation of coordinates followed by permutations of symbols at each coordinate separately. These operations preserve the Hamming distances between codewords. We say that an $(n,M,d)_q$ code is \emph{unique} if all codes with the same parameters are equivalent.

An upper bound for the size of an $(n,M,d)_q$ code is the Singleton bound, 
\[
M \leq q^{n-d+1}.
\]
Codes meeting this bound are called maximum distance separable (MDS), and $k=n-d+1$ is called the \emph{dimension} of the MDS code. A $q$-ary MDS code with length $n$ and dimension $k$ is called an $(n,k)_q$ MDS code.  MDS codes have the property that, given any $k$ coordinates, each $k$-tuple of symbols from $\mathcal{A}$ occurs in the given coordinates in exactly one codeword. It has been conjectured (the MDS conjecture) that an MDS code with parameters $n$, $k$, and prime power $q$ with $1 < k < n-1$ exists if and only if $n \leq q+1$, with the exception that when $q$ is a power of two, MDS codes with $n=q+2$, and $k=3$ or $k=q-1$ exist \cite{S55b}. The conjecture has been proved for linear codes when $q$ is a prime by Ball \cite{B12} and when $q$ is a power of a prime $p$ and $k < 2p-2$ by Ball and De Beule \cite{BD12}.

MDS codes with $d=1$ or $k=1$ are unique and they are called \emph{trivial}; the first contains the whole space $\mathcal{A}^n$ and the latter is a repetition code. The case $k=2$ corresponds to sets of mutually orthogonal Latin squares, which have been classified for $q\leq 9$ \cite{EW14}. MDS codes with $d=2$, dimension $k$ and alphabet size $q$ correspond to $k$-dimensional Latin hypercubes of order $q$. For $q=2,3$, they are trivially unique. Potapov and Krotov \cite{PK11} give a recursive formula for the number of $(k+1,k)_4$ MDS codes but do not classify them up to equivalence. With small $k$ and $q$, $(k+1,k)_q$ MDS codes have been classified by McKay and Wanless \cite{MW08}. 

For $q=2$, nontrivial MDS codes with $d>2$ do not exist. For $q=3$, the only nontrivial MDS code with $d>2$ is the unique $(4,2)_3$ MDS code. Alderson \cite{A06} showed that the $(6,3)_4$ and the $(5,3)_4$ MDS codes are unique. The nonexistence of pairs of mutually orthogonal Latin squares of order $6$ implies the nonexistence of nontrivial $6$-ary MDS codes with $d\geq 3$. For $q=5,7$ all MDS codes with $d\geq 3$, except the $(4,2)_7$ codes, are equivalent to linear codes \cite{KKO14}. For $q=5$, they are unique, which follows from the uniqueness in terms of the notion of equivalence of linear codes \cite{BBFKKW06}. For $q=7,8$, the MDS codes with $d=3$ were classified in \cite{KKO14,KO14}. For $q=7$, some classification results exist for codes with $d>3$ in terms of different notions of equivalence for linear codes; see for example \cite{K06}.

In this work, we classify all $7$-ary $8$-ary MDS codes with $n>3$, $d>3$ by a computer search to finish the classification of all $7$-ary and $8$-ary MDS codes with $d\geq 3$. In Section~\ref{sec:prel}, we discuss basic properties of MDS codes and computational tools used in this work. Section~\ref{sec:gener} explains the computer search used, and the results are given in Section~\ref{sec:res}. Finally, in Section~\ref{sec:d2}, we discuss the case $d=2$ corresponding to Latin hypercubes for small $q$.

\section{Preliminaries} \label{sec:prel}

\subsection{MDS codes}

The operations maintaining equivalence of unrestricted codes form a group of order $(q!)^n n!$, denoted by $G_n$, that acts on $\mathcal{A}^n$. The stabilizer of a code $C$ under this action is called the automorphism group of the code, $\mathrm{Aut}(C)$.

Let $C$ be an $(n,k)_q$ MDS code. For a symbol $i \in \mathcal{A}$, we denote by $C_i$ the $(n-1,k-1)_q$ MDS code obtained by removing the last coordinate and retaining the codewords that have symbol $i$ in that coordinate,
\[
C_i = \{ (c_1,c_2,\dots,c_{n-1}) \,:\, c \in C, c_n = i\}.
\]

Removing a symbol at a given position of each codeword of $C$ yields an $(n-1,k)_q$ MDS code $C'$. This operation is called \emph{puncturing}. We say that $C$ is an \emph{extension} of $C'$. An $(n,k)_q$ MDS code $C$ is \emph{extendable} if there exists an extension of $C$ that is an $(n+1,k)_q$ MDS code. There is a one-to-one correspondence between extensions of $C$ (up to the position of the new coordinate) and labeled partitions of $C$ into $(n,k-1)_q$ MDS codes: $\bigcup_{i \in \mathcal{A}} C^i i$, where $C^ii$ denotes the code obtained by adding the symbol $i$ at the end of each codeword in $C^i$, is an extension of $C$ if and only if the sets $C^i$ form a partition of $C$ into $(n,k-1)_q$ MDS codes.

It is known that if a linear code is extendable, it has an extension that is linear \cite{AG09}. For fixed $q$ and $d$ and sufficiently large $n$, an extension of a linear MDS code is necessarily equivalent to a linear MDS code \cite{ABS07}. However, very little is known about extending nonlinear codes.

\subsection{Tools}

We reduce the problem of detecting code equivalence to the graph isomorphism problem and use the software \emph{nauty} \cite{MP14} for solving the instances. For an $(n,M,d)_q$ code $C$, we define a colored graph as follows: The graph contains $n$ copies of $K_q$, the complete graph of order $q$, named $\Gamma_1,\Gamma_2,\dots,\Gamma_n$, colored with color 1. For each $i$, the vertices in $\Gamma_i$ correspond to the elements in $\mathcal{A}$. For each codeword $c \in C$, there is an additional vertex, colored with color 2, which is adjacent to the vertex corresponding to the symbol $c_i$ in $\Gamma_i$ for each $i\in \{1,2,\dots,n\}$. A graph isomorphism preserving the coloring permutes the copies of $K_q$ and the vertices in each of them separately, corresponding to a permutation of coordinates and permutations of symbols in each coordinate separately in the code, respectively. Two codes are equivalent if and only if their corresponding graphs are isomorphic. Further, the automorphism group of a code corresponds to the automorphism group of the graph. The software \emph{nauty} can be used to find the automorphism group of a graph and all isomorphisms between two graphs. For large graphs, we use \emph{nauty} in the sparse mode with the random Schreier algorithm enabled.

For a finite set $X$ and a family $\mathcal{S}$ of subsets of $X$, an \emph{exact cover} is a subset $\mathcal{S}'$ of $\mathcal{S}$ that partitions $X$. We use the library \emph{libexact} \cite{KP08} for finding all exact covers, given $X$ and $\mathcal{S}$.

Finally, we use \emph{cliquer} \cite{NO03} to find all cliques of given size in a graph.

\section{Generation} \label{sec:gener}

Given a set $\mathcal{C}$ of equivalence class representatives of $(n,k)_q$ MDS codes, every $(n+1,k)_q$ MDS code is equivalent to a code that is an extension of a code in $\mathcal{C}$. This reduces the problem of generating equivalence class representatives of $(n+1,k)_q$ MDS codes to the problem of finding all partitions of all codes in $\mathcal{C}$ into $(n,k-1)_q$ MDS codes. For $k=2$, this is precisely the method of extending a set of MOLS with a new Latin square by partitioning the Latin squares in the set into common transversals, used for example in \cite{EW14, MW04, MMM07, P63}; for discussion of this and similar problems, see for example \cite{O05}. 

Consider a partition of an $(n,k)_q$ MDS code $C$ into $(n,k-1)_q$ MDS codes $C^i$ for $i \in \mathcal{A}$,
\[
C = \bigcup_{i\in \mathcal{A}} C^i.
\]
We have
\[
C_j = \bigcup_{i \in \mathcal{A}} C^i_j
\]
and
\[
C^i = \bigcup_{j\in\mathcal{A}} C^i_jj.
\]

Finding all possible partitions of $C$ into $(n,k-1)_q$ MDS codes can now be reduced to finding all possible partitions $\{C^i_j\}_{i \in \mathcal{A}}$ of $C_j$ for each $j$ and combining those partitions in all possible ways. Instead of solving the reduced problem using recursion, our method for finding the partitions of $(n,k)_q$ MDS codes uses the results from finding the partitions of $(n-1,k-1)_q$ MDS codes. In this work, we generate all $q$-ary MDS codes with $k\geq 3$, $d\geq 4$ starting from the MDS codes with $k=2$, their partitions, and the codes with $d=3$.

\subsection{Algorithm}

As an initial step, we find all partitions of the equivalence class representatives of $(n,2)_q$ MDS codes into $(n,1)_q$ MDS codes in the following way. Let $\mathcal{C}_n$ be the set of representatives of $(n,2)_q$ MDS codes. For a given $n$, we loop over all $\hat{C} \in \mathcal{C}_{n+1}$ and over all $n+1$ punctured codes $C'$ of $\hat{C}$. Now, $\hat{C}$ is an extension of $C'$ so it induces a partition of $C'$ into $(n,1)_q$ MDS codes. We find the equivalence class representative $\hat{C}' \in \mathcal{C}_n$ for which $\hat{C}' \cong C'$ and find all $g \in G_n$ for which $gC' = \hat{C}'$. Considering how the codewords are mapped by $g$, we get partitions of $\hat{C}'$ into $(n,1)_q$ MDS codes. All partitions of each element in $\mathcal{C}_n$ are obtained in this way, as every such partition corresponds to an $(n+1,2)_q$ MDS code $C$ that is equivalent to some $\hat{C} \in \mathcal{C}_{n+1}$.

The algorithm for finding all partitions of an $(n,k)_q$ MDS code $C$ into $(n,k-1)_q$ MDS codes consists of three parts.
\begin{enumerate}
\item For each $j \in \mathcal{A}$, find all $(n-1,k-2)_q$ MDS codes that occur as a part in a partition of $C_j$ into $(n-1,k-2)_q$ MDS codes. Because $C_j \cong \hat{C}$, where $\hat{C}$ is a representative whose partitions into $(n-1,k-2)_q$ MDS codes are known, this can be solved by finding a $g \in G_n$ for which $C_j=g\hat{C}$ and applying $g$ directly to each partition of $\hat{C}$. Denote the set of $(n-1,k-2)_q$ MDS codes found in this step by $\mathcal{S}_j$.
\item Find all subsets $D$ of $C$ that are $(n,k-1)_q$ MDS codes for which $D_j \in \mathcal{S}_j$ for each $j$. This is done by finding all $q$-cliques in the $q$-partite graph where, for each $j$, each element of $\mathcal{S}_j$ corresponds to a vertex in the $j$th part, and between two vertices in different parts there is an edge if the minimum distance between them is at least $n-k+1$.
\item Find all partitions of $C$ into $(n,k-1)_q$ MDS codes. This is done by solving an exact cover problem, where $q$ sets are selected from the family created in the previous phase such that each codeword in $C$ is covered exactly once.
\end{enumerate}

We run the algorithm for one representative $C$ from each equivalence class of $(n,k)_q$ MDS codes, and finally perform isomorph rejection for all $(n+1,k)_q$ MDS codes corresponding to the obtained partitions.

\subsection{Consistency check}

We check the consistency of each phase of the generation by double counting. The total number of $(n+1,k)_q$ MDS codes is the sum of the sizes of the equivalence classes,
\[
\sum_{C \in \mathcal{C}_{n+1}} \frac{|G_{n+1}|}{|\mathrm{Aut}(C)|},
\]
where $\mathcal{C}_{n+1}$ is the obtained set of equivalence class representatives of $(n+1,k)_q$ MDS codes. On the other hand, the same count can be obtained as follows. Let $N(C)$ be the number of partitions of an $(n,k)_q$ MDS code $C$ into $(n,k-1)_q$ MDS codes. The number of $(n+1,k)_q$ MDS codes is
\[
\sum_C N(C) q! = \sum_{\hat{C} \in \mathcal{C}_{n}} \frac{|G_{n}| N(\hat{C})}{|\mathrm{Aut}(\hat{C}) |} q!,
\]
where the first sum is taken over all $(n,k)_q$ MDS codes, $\mathcal{C}_{n}$ is a set containing exactly one representative from each equivalence class of $(n,k)_q$ MDS codes, and $N(\hat{C})$ is the number of partitions of $\hat{C}$ obtained by the algorithm.

\section{Results} \label{sec:res}

The algorithm was run for $q=7$ and $q=8$, for each $k= 3,4,\dots,q-1$, and for each $n$ starting from $n= k+2$ and increasing $n$ until no more codes were found. The numbers of inequivalent extendable codes are shown in Tables~\ref{tab:n_extendable_7} and \ref{tab:n_extendable_8}, for $q=7,8$, respectively, and the numbers of equivalence classes are shown in Tables~\ref{tab:n_results_7} and \ref{tab:n_results_8}. In the tables, the numbers marked with * are obtained in this work using the method described above. The previously known numbers are obtained from the classification of one-error-correcting MDS codes and sets of MOLS, and in addition for $q=7$ the MDS conjecture gives an upper bound for $n$, and the nonexistence of $(10,3)_8$ MDS codes follows form the nonexistence of $(9,2)_8$ MDS codes. The computations lasted a few minutes for $q=7$ and 21 hours for $q=8$; 18 hours were needed for extending the $(9,7)_8$ MDS codes. 

By comparing the equivalence class representatives with known linear codes, we see that all $8$-ary MDS codes with $d\geq 5$ or $k\geq 4,d=4$ are equivalent to a linear code.

\begin{table}
\begin{center}
\begin{tabular}{r|rrrrrr}
$n \,\backslash \, k$ & $2$ &  $3$ & $4$ & $5$ & $6$ & $7$  \\
\hline
$3$ & $6$ \\
$4$ & $2$ & $1$ \\
$5$ & $1$ & *$1$ & $1$ \\ 
$6$ & $1$ & *$1$ & *$1$ & $1$ \\
$7$ & $1$ & *$1$ & *$1$ & *$1$ & $1$ \\
$8$ & $0$ & $0$ & $0$ & $0$ & $0$ & $0$ \\
\hline
\end{tabular}
\vspace{0.3cm} \\
\end{center}
\caption{Number of inequivalent extendable nontrivial $(n,k)_7$ MDS codes}
\label{tab:n_extendable_7}
\end{table}

\begin{table}
\begin{center}
\begin{tabular}{r|rrrrrr}
$n \,\backslash \, k$ & $2$ & $3$ & $4$ & $5$ & $6$ & $7$\\
\hline
$3$ & $147$ &\\
$4$ & $7$ & &\\
$5$ & $1$ & $1$ & \\ 
$6$ & $1$ & *$3$ & $1$ &  \\
$7$ & $1$ & *$1$ & *$1$  & $1$ &  \\
$8$ & $1$ & *$1$ & *$1$ & *$1$ & $1$   \\
$9$ & $0$ & $0$ & $0$ & $0$ & $0$ & $0$     \\
\hline
\end{tabular}
\vspace{0.3cm} \\
\end{center}
\caption{Number of equivalence classes of nontrivial $(n,k)_7$ MDS codes}
\label{tab:n_results_7}
\end{table}

\begin{table}
\begin{center}
\begin{tabular}{r|rrrrrr}
$n \,\backslash \, k$ & $2$ &  $3$ & $4$ & $5$ & $6$ & $7$  \\
\hline
$3$ & $2\,024$ \\
$4$ & $38$ & $4\,470$ \\
$5$ & $1$ & *$44$ & $36$ \\ 
$6$ & $1$ & *$2$ & *$1$ & $12$ \\
$7$ & $1$ & *$2$ & *$1$ & *$1$ & $7$ \\
$8$ & $1$ & *$2$ & *$1$ & *$1$ & *$1$ & 4 \\
$9$ & $0$ & *$2$ & *$0$ & *$0$ & *$0$ & *$1$ \\
$10$ & & $0$ & & & & *$0$ \\
\hline
\end{tabular}
\vspace{0.3cm} \\
\end{center}
\caption{Number of inequivalent extendable nontrivial $(n,k)_8$ MDS codes}
\label{tab:n_extendable_8}
\end{table}

\begin{table}
\begin{center}
\begin{tabular}{r|rrrrrrr}
$n \,\backslash \, k$ & $2$ & $3$ & $4$ & $5$ & $6$ & $7$ & $8$ \\
\hline
$3$ & $283\,657$ &\\
$4$ & $2\,165$ & &\\
$5$ & $39$ & $12\,484$ & &\\ 
$6$ & $1$ & *$39$ & $14$ & & \\
$7$ & $1$ & *$2$ & *$2$  & $8$ & &  \\
$8$ & $1$ & *$2$ & *$1$ & *$2$ & $4$ & &  \\
$9$ & $1$ & *$2$ & *$1$ & *$1$ & *$2$ & $4$ &    \\
$10$ & $0$ & *$1$ & *$0$ & *$0$ & *$0$ & *$1$ & $0$   \\
$11$ & & $0$& & & & *$0$ \\
\hline
\end{tabular}
\vspace{0.3cm} \\
\end{center}
\caption{Number of equivalence classes of nontrivial $(n,k)_8$ MDS codes}
\label{tab:n_results_8}
\end{table}

\section{MDS codes with $d=2$} \label{sec:d2}

The preceding part of this work completes the classification of all MDS codes with minimum distance at least $3$ for alphabet sizes at most $8$. In this section, we discuss the remaining case of minimum distance $2$. We use a simple construction to give a lower bound for the number of equivalence classes of $(n,n-1)_q$ MDS codes, or equivalently the paratopy classes of $(n-1)$-dimensional Latin hypercubes of order $q$, for small $q$.

We denote by $N_n$ the number of $(n,n-1)_q$ MDS codes and by $M_n$ the number of equivalence classes of $(n,n-1)_q$ MDS codes. Because each class contains at most $|G_n| = n! (q!)^{n}$ elements, we get
\begin{equation}
M_n \geq \frac {N_n} {n! (q!)^{n}}. \label{eq:lowerboundm}
\end{equation}

For small values of $n$ and $q$, we find lower bounds for $N_n$ and $M_n$ by the following construction. This construction resembles Construction X4 given in \cite[Chapter 18.7]{MS77}. We denote by $CC'$ the direct sum of codes $C$ and $C'$ of lengths $n$ and $n'$, respectively,
\[
CD = \{(c_1,c_2,\dots,c_{n},c'_1,c'_2,\dots,c'_{n'}) \,:\, c \in C, c' \in C' \}.
\]

\begin{thm} \label{thm:constr}
Let $C$ and $C'$ be $(n_1,n_1-1)_q$ and $(n_2,n_2-1)_q$ MDS codes, respectively. Then
\[
C'' = \bigcup_{i\in\mathcal{A}} C_iC'_i
\]
 is an $(n_1+n_2-2,n_1+n_2-3)_q$ MDS code.
\end{thm}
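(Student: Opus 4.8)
The plan is to read off from the defining property of MDS codes that each of $C$ and $C'$ is the ``graph'' of a function, to reinterpret $C''$ as the fiber product of these two functions, and then to check length, size, and minimum distance directly; the Singleton bound will then upgrade ``$d\geq 2$'' to the full MDS conclusion.

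First I would record the functional description. Since $C$ is an $(n_1,n_1-1)_q$ MDS code, every $(n_1-1)$-tuple occurs in the first $n_1-1$ coordinates of exactly one codeword, so there is a map $f\colon \mathcal{A}^{n_1-1}\to\mathcal{A}$ with $C=\{(x,f(x)) : x\in\mathcal{A}^{n_1-1}\}$ and $C_i=\{x : f(x)=i\}$; likewise $C'=\{(y,g(y)):y\in\mathcal{A}^{n_2-1}\}$ and $C'_i=\{y:g(y)=i\}$. Then $C_iC'_i=\{(x,y):f(x)=i=g(y)\}$, so
\[
C'' \;=\; \{(x,y)\in\mathcal{A}^{n_1-1}\times\mathcal{A}^{n_2-1} : f(x)=g(y)\}.
\]
From this the length is $(n_1-1)+(n_2-1)=n_1+n_2-2$, and counting fibres gives $|C''|=\sum_{x}|C'_{f(x)}|=q^{n_1-1}\cdot q^{n_2-2}=q^{n_1+n_2-3}$, using that each $C'_i$ is an $(n_2-1,n_2-2)_q$ MDS code and hence has $q^{n_2-2}$ elements. (Equivalently, one may argue that the slices $C_iC'_i$ are pairwise disjoint --- two distinct slices containing a common $(x,y)$ would force two distinct codewords of $C$ agreeing in the first $n_1-1$ coordinates --- and add up the sizes.) Note that $q^{n_1+n_2-3}$ is exactly the Singleton bound $q^{n-d+1}$ for length $n=n_1+n_2-2$ and $d=2$.

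Next I would verify that the minimum distance of $C''$ is at least $2$. Suppose $(x,y)\neq(x',y')$ are codewords of $C''$ agreeing in all but one coordinate. That coordinate lies in the first block or in the second. If it lies in the first block, then $y=y'$ and $x,x'$ differ in exactly one coordinate, so $f(x)=g(y)=g(y')=f(x')$; hence $(x,f(x))$ and $(x',f(x'))$ are distinct codewords of $C$ that agree in the last coordinate and in all but one of the first $n_1-1$ coordinates, i.e.\ in $n_1-1$ coordinates in total, contradicting that $C$ has minimum distance $2$. The case of the second block is symmetric, using the minimum distance of $C'$. Therefore $d(C'')\geq 2$, and combined with $|C''|=q^{n-1}$ the Singleton bound forces $d(C'')=2$ with equality, so $C''$ is an $(n_1+n_2-2,n_1+n_2-3)_q$ MDS code.

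The argument is essentially a routine verification; the only point that needs a little care is the minimum-distance step, where one must remember that the two ambient codewords of $C$ (or $C'$) produced by a single-coordinate discrepancy in $C''$ automatically agree in the deleted last coordinate as well, so that they share $n_1-1$ (respectively $n_2-1$) coordinates, which is the decisive count. Recognising $C''$ as the fiber product $\{f=g\}$ is what makes all three checks transparent; carried out purely in terms of the slices $C_i,C'_i$ the bookkeeping is slightly more awkward but goes through in the same way.
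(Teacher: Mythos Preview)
Your proof is correct. It is essentially the same verification as the paper's, just repackaged: you describe $C''$ as the fiber product $\{(x,y):f(x)=g(y)\}$, whereas the paper works directly with the slice decomposition $\bigcup_i C_iC'_i$. The size count is identical (the paper writes it as $q\cdot q^{n_1-2}\cdot q^{n_2-2}$, summing over the $q$ disjoint slices). For the minimum distance the paper splits by \emph{same slice versus different slice}: within a slice $C_iC'_i$ the direct sum has minimum distance $\min(d(C_i),d(C'_i))=2$, and across slices any two words differ in at least one coordinate of each block; you instead split by \emph{which block contains the single discrepancy}, which is the same dichotomy read in the other direction (a distance-$1$ pair with the discrepancy in one block forces the other block to agree, hence forces equal slice index, reducing to the within-slice case). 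Both routes are short; your fiber-product viewpoint makes the size count marginally cleaner, while the paper's slice split makes the minimum-distance step marginally more direct.
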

\begin{proof}
The code $C''$ contains $qq^{n_1-2}q^{n_2-2}=q^{n_1+n_2-3}$ codewords of length $n_1+n_2-2$. Because the minimum distance of each $C_i$ and $C'_i$ is $2$, the minimum distance of each $C_iC'_i$ is at least $2$. For distinct $i,j$, any codeword in $C_i C'_i$ differs from any codeword in $C_jC'_j$ in at least one position in the first $n_1-1$ coordinates and in at least one position in the last $n_2-1$ coordinates. Therefore, $C''$ has minimum distance $2$.
\end{proof}

Because the construction in Theorem~\ref{thm:constr} maps exactly $q!$ pairs of $(C,C')$ to the same code, this yields a lower bound for the number of $(n,n-1)_q$ MDS codes,
\begin{equation}
N_{n} \geq \max_{n'} \frac{N_{n'} N_{n-n'+2}}{q!}. \label{eq:lowerboundn}
\end{equation}
This lower bound along with the known numbers of $(n,n-1)_q$ MDS codes and \eqref{eq:lowerboundm} allows us to find lower bounds for the number of equivalence classes of $(n,n-1)_q$ MDS codes for small $n$.

Table~\ref{tab:hypercubesm} lists the known numbers of equivalence classes of $(n,n-1)_q$ MDS codes for $4\leq q\leq 8$ given in \cite{MW08} and lower bounds obtained by \eqref{eq:lowerboundm} and \eqref{eq:lowerboundn} for small values of $n$. Since the lower bounds for the unknown values are rather large, except for a few entries, obtaining further classification results by explicitly constructing representatives from each equivalence class is not feasible in general.

Finally, we note that \cite{PK11} gives a double exponential lower bound for $N_n$ when $q\geq 4$:
\[
\log_2 N_n \geq 
\begin{cases}
\left(\frac{q}{2}\right)^{n-1}, &  \text{for even } q, \\
\left(\frac{(q-3)(q-1)}{4}\right)^{(n-1)/2}, & \text{for odd } q.
\end{cases}
\]
From \eqref{eq:lowerboundm}, it follows that that the number of equivalence classes $M_n$ for $q\geq 4$ also has a lower bound that is double exponential in $n$.

\begin{table}
\begin{center}
\begin{tabular}{r|rrrrr}
$n \,\backslash\, q$ & $4$ & $5$ & $6$ & $7$ & $8$ \\
\hline
$3$ & $2$ & $2$ & $12$ & $147$ & $283\,657$ \\ 
$4$ & $5$ & $15$ & $264\,248$ & $\geq 4.8 \times 10^{7}$ & $\geq 4.6 \times 10^{15}$ \\ 
$5$ & $26$ & $86$ & $\geq 4.8 \times 10^{7}$ & $\geq 2.3 \times 10^{13}$ & $\geq 6.2 \times 10^{25}$ \\ 
$6$ & $4\,785$ & $3\,102$ & $\geq 1.4 \times 10^{13}$ & $\geq 9.5 \times 10^{18}$ & $\geq 6.9 \times 10^{35}$ \\ 
$7$ & $\geq 3.5 \times 10^{9}$ & $\geq 1.5 \times 10^{3}$ & $\geq 3.1 \times 10^{15}$ & $\geq 3.3 \times 10^{24}$ & $\geq 6.6 \times 10^{45}$ \\ 
\end{tabular}
\vspace{0.3cm} \\
\end{center}
\caption{Number of equivalence classes of $(n,n-1)_q$ MDS codes}
\label{tab:hypercubesm}
\end{table}

\end{document}